\newcommand{\dbar}{\overline{\partial}}
\newcommand{\ddbar}{\sqrt{-1}\partial\dbar}
\newtheorem{theorem}{Theorem}[section]
\newtheorem{proposition}{Proposition}[section]
\newtheorem{lemma}{Lemma}[section]
\newtheorem{definition}{Definition}[section]
\renewcommand{\thefootnote}{\fnsymbol{footnote}}
\newcommand{\starttext}{ \setcounter{footnote}{0}
\renewcommand{\thefootnote}{\arabic{footnote}}}
\newcommand{\beq}{\begin{equation}}
\newcommand{\bea}{\begin{eqnarray}}
\newcommand{\eea}{\end{eqnarray}} \newcommand{\ee}{\end{equation}}
\def\ba{\begin{eqnarray}}
\def\ea{\end{eqnarray}}
\def\ti\tilde
\def\u{\underline}
\def\ti{\tilde}
\begin{document}

\starttext \baselineskip=18pt \setcounter{footnote}{0}

\title[Diameter estimates in K\"ahler geometry II]{Diameter estimates in K\"ahler geometry II: \\
{\small removing the small degeneracy assumption} }

\author[{Bin Guo, Duong H. Phong, Jian Song and Jacob Sturm}
]{Bin Guo$^*$, Duong H. Phong$^\dagger$, Jian Song$^\ddagger$ and Jacob Sturm$^{\dagger\dagger}$ }

\thanks{Work supported in part by the National Science Foundation under grants DMS-22-03273, DMS-22-03607 and DMS-23-03508, and the collaboration grant 946730 from Simons Foundation.}

\address{$^*$ Department of Mathematics \& Computer Science, Rutgers University, Newark, NJ 07102}

\email{bguo@rutgers.edu}

\address{$^\dagger$ Department of Mathematics, Columbia University, New York, NY 10027}

\email{phong@math.columbia.edu}

\address{$^\ddagger$ Department of Mathematics, Rutgers University, Piscataway, NJ 08854}

\email{jiansong@math.rutgers.edu}

\address{$^{\dagger\dagger}$ Department of Mathematics \& Computer Science, Rutgers University, Newark, NJ 07102}

\email{sturm@andromeda.rutgers.edu}

\begin{abstract}

{\footnotesize In this short note, we remove the small degeneracy assumption in our earlier works \cite{GPSS1, GPSS2}. This is achieved by a technical improvement of Corollary 5.1 in \cite{GPSS1}.  As a consequence, we establish the same geometric estimates for diameter, Green's functions and Sobolev inequalities  under an entropy bound for the K\"ahler metrics, without any small degeneracy assumption.  }

\end{abstract}

\maketitle

\section{Introduction} \label{secintro}

\setcounter{equation}{0}

 The classical works of Yau and his collaborators have built a wide range of geometric estimates for  Riemannian manifolds.  A lower bound for the Ricci curvature  is usually required to guarantee uniformity and compactness (c.f. \cite{ScYa, CL, CLY}). In our developing program to build geometric analysis on complex varieties with singularities \cite{GPSS1, GPSS2}, we managed to establish uniform diameter bounds, Sobolev inequalities and the spectral theorem for a large family of K\"ahler metrics on both smooth compact K\"ahler manifolds and normal K\"ahler varieties. Furthermore, these uniform estimates do not require any Ricci curvature bound. Instead they only depend on an entropy bound and a small degeneracy assumption (c.f. (\ref{degass})). 
 
 \smallskip

In his celebrated work \cite{Y} on the Calabi conjecture, Yau initiated the theory of global complex Monge-Amp\`ere equations.  The analytic theory was subsequently developed by Kolodziej \cite{K} in the framework of pluripotential theory. It was further extended to complex Monge-Amp\`ere equations on singular K\"ahler varieties \cite{EGZ, DPali}. Recently the PDE methods developed by Guo-Phong-Tong in \cite{GPT} gave an alternative proof of Kolodziej’s estimates. This approach has had many important geometric consequences \cite{GPa, GPSS1, GPSS2}.

We begin by reviewing the set-up and results of \cite{GPSS1, GPSS2}.  Let $(X, \theta_X)$ be an $n$-dimensional compact K\"ahler manifold equipped with a K\"ahler metric $\theta_X$. Let ${\mathcal{K}}(X)$ be the space of K\"ahler metrics on $X$ and let the $p$-th Nash-Yau entropy of a K\"ahler metric $\omega\in \mathcal{K}(X)$ associated to $(X, \theta_X)$ be defined by
\begin{equation}
\mathcal{N}_{X, \theta_X, p}(\omega) = \frac{1}{V_\omega}  \int_X \left| \log \left((V_\omega)^{-1}\frac{\omega^n}{\theta_X^n} \right)\right|^p \omega^n, ~ V_\omega= \int_X \omega^n = [\omega]^n, 
\end{equation}
for $p>0$. 

We introduce the following set of admissible metrics for given parameters $A,  K>0$, $p>n$,
\begin{equation} \label{admiss}
{\mathcal V}(X, \theta_X, n, A, p, K ) = \left\{ \omega\in {\mathcal{K}}(X):   [\omega]\cdot[\theta_X]^{n-1}\le A, ~  \mathcal{N}_{X, \theta_X, p}(\omega)  \le K   \right\}. 
\end{equation}
Let $\gamma$ be a non-negative continuous function. 
We further define a subset of ${\mathcal V}(X, \theta_X, n, A, p, K )$ by 
\begin{equation}\label{admiss2}
 {\mathcal  W} (X, \theta_X, n, A, p, K;   \gamma ) = \left\{ \omega \in {\mathcal{V}}(X, \theta_X, n, A, p, K): ~(V_\omega)^{-1}\frac{\omega^n} {\theta_X^n} \geq \gamma \right\}.   
\end{equation}

In the earlier works \cite{GPSS1, GPSS2} of the authors, uniform geometric estimates for the Green's function, Sobolev constant and diameter were established for K\"ahler metrics in ${\mathcal  W} (X, \theta_X, n, A, p, K;   \gamma )$  if $\gamma$ is a non-negative continuous function on $X$ satisfying 
\begin{equation}\label{degass}
\dim_{\mathcal{H}} \{\gamma =0\} < 2n-1.
\end{equation}
The small degeneracy assumption (\ref{degass}) requires the Monge-Amp\`ere measure to be uniformly positive away from a closed subset of $X$ of Hausdorff co-dimension strictly greater than $1$. In fact, this assumption naturally arises in most of geometric applications, because degeneration usually occurs along an analytic subvariety of $X$, which is closed and of complex codimension no less than $1$. The small degeneracy assumption was removed in \cite{GGZ} for the uniform diameter estimate if the underlying K\"ahler class lies in a compact set in the K\"ahler cone of a smooth K\"ahler manifold. Very recently, Vu \cite{Vu} applied the Sobolev inequality of \cite{GPSS2} to derive the diameter bounds for $\mathcal{V}(X, \theta_X, n, A, p, K)$. This leads us to try and remove the small degeneracy assumption in general, particularly for the Green's function and the Sobolev inequality in \cite{GPSS1, GPSS2}.

Indeed in this note, we will remove the small degeneracy assumption (\ref{degass}) for all the results of \cite{GPSS1, GPSS2, GPS}. This is achieved by the  following simple technical improvement of Corollary 5.1 of \cite{GPSS1}.

\begin{proposition} \label{mainprop}
Suppose $\omega\in \mathcal{V}(X, \theta_X, n, A, p, K)$.  If $v\in C^2$ satisfies $|\Delta_\omega v|\le 1$ and $\int_X v\omega^n = 0$, then there is a uniform constant $C=C(X, \theta_X, n, A, p, K)>0$ such that
$$\sup_X |v|\le C.$$ 

\end{proposition}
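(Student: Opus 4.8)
The plan is to reduce the statement to a one‑sided bound on $\sup_X v$ and then to carry out the auxiliary complex Monge--Amp\`ere argument of Guo--Phong--Tong \cite{GPT} on the super‑level sets of $v$; the only new point compared with the proof of Corollary 5.1 of \cite{GPSS1} is that the solvability of, and the estimate for, the auxiliary equation will be drawn from the entropy bound $\mathcal N_{X,\theta_X,p}(\omega)\le K$ rather than from a pointwise lower bound $\omega^n\ge \gamma V_\omega\theta_X^n$ --- this substitution is exactly the ``technical improvement'' that permits the small degeneracy assumption to be dropped. Since $-v$ also satisfies the hypotheses and $\int_X v\,\omega^n=0$ forces $\inf_X v\le 0\le \sup_X v$, we have $\sup_X|v|=\max\big(\sup_X v,\sup_X(-v)\big)$, so it suffices to bound $\beta:=\sup_X v$ by a constant $C(X,\theta_X,n,A,p,K)$, and we may assume $\beta\ge 1$. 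Throughout we use only the one‑sided inequality $\Delta_\omega v\ge -1$ together with $\int_X\Delta_\omega v\,\omega^n=0$, the entropy bound, and the fact that $V_\omega=[\omega]^n$ and $[\theta_X]\cdot[\omega]^{n-1}$ are bounded above by a constant $C(X,\theta_X,n,A)$, which follows from $[\omega]\cdot[\theta_X]^{n-1}\le A$ and the log‑concavity of $j\mapsto [\theta_X]^{n-j}\cdot[\omega]^{j}$ (Khovanskii--Teissier).

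For $k\in[0,\beta)$ put $\Omega_k=\{v>k\}$ and $a_k=\int_X(v-k)_+\,\omega^n>0$, and let $\psi_k$ be the solution, normalized by $\sup_X\psi_k=0$, of
\[
(\theta_X+\ddbar\psi_k)^n \;=\; \frac{[\theta_X]^n}{a_k}\,(v-k)_+\,\omega^n .
\]
The right‑hand side is a positive measure of total mass $[\theta_X]^n=[\theta_X+\ddbar\psi_k]^n$, so the equation is of the type treated by Kolodziej and by \cite{GPT}; since $p>n$, the entropy bound on $\omega^n/\theta_X^n$ shows that the density of this measure with respect to $\theta_X^n$ lies in the Orlicz class $L\log^pL$ with controlled norm (this is where \cite{GPSS1} instead invoked $\omega^n\ge\gamma V_\omega\theta_X^n$), and hence $\psi_k$ is continuous with $\sup_X(-\psi_k)$ bounded in terms of $X,\theta_X,n,A,p,K$ and, at worst, $\log\beta$ and $\log(1/a_k)$. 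One then compares $\psi_k$ with $(v-k)_+$ by the Alexandrov--Bakelman--Pucci--type argument of \cite{GPT}: for a small $\varepsilon=\varepsilon(X,\theta_X,n,A)>0$, on the contact set of $\psi_k+\varepsilon (v-k)_+$ one has $\ddbar\psi_k\le -\varepsilon\,\ddbar v$, so $0<\theta_X+\ddbar\psi_k\le \theta_X-\varepsilon\,\ddbar v$; taking determinants and applying the arithmetic--geometric mean inequality relative to $\omega$, together with $\Delta_\omega v\ge -1$, yields $(\theta_X+\ddbar\psi_k)^n\le C\,(\tr_\omega\theta_X+1)^n\,\omega^n$ on that set. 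Inserting this into the Monge--Amp\`ere equation, integrating over the contact set, and using $\int_X(\tr_\omega\theta_X)\,\omega^n=n[\theta_X]\cdot[\omega]^{n-1}\le C$ to control the resulting integral, one obtains a recursive estimate for $a_k$ and $\mathrm{vol}_\omega(\Omega_k)$ of De Giorgi type, with a gain of a positive power of the volume coming from $p>n$.

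Because $a_0=\int_X v_+\,\omega^n$ is finite, the standard iteration lemma (Stampacchia) applied to the decreasing function $k\mapsto a_k$ then forces $a_k=0$, i.e.\ $\Omega_k=\emptyset$, for all $k$ beyond a threshold $C(X,\theta_X,n,A,p,K)$ --- here one checks that the $\log\beta$ and $\log(1/a_k)$ contributions are of lower order, so that the resulting inequality for $\beta$ closes. Thus $\beta=\sup_X v\le C$, and applying the same bound to $-v$ gives $\sup_X|v|\le C$.

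I expect the main obstacle to lie in the auxiliary‑equation step. First, the $L^\infty$ estimate for $\psi_k$ must be obtained from the entropy bound on $\omega^n/\theta_X^n$ \emph{alone}, with the dependence on $\beta$ kept to a lower‑order ($\log$) term so that the final inequality for $\beta$ is self‑improving; it is exactly here that the hypothesis $p>n$ and the removal of the lower bound $\gamma$ both enter. Second, the comparison step has to be run even though $v$ is \emph{not} $\theta_X$‑plurisubharmonic but merely has bounded $\omega$‑Laplacian; this is why one must pass to determinants via the arithmetic--geometric mean inequality with respect to $\omega$ and integrate over the contact set, so as to trade the pointwise, uncontrolled quantity $\tr_\omega\theta_X$ for the bounded integral $\int_X(\tr_\omega\theta_X)\,\omega^n$.
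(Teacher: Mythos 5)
Your reduction to a one-sided bound is fine, but from there your route diverges substantially from the paper's and, as written, it has two concrete gaps that I do not see how to close.

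The paper's proof is a one-shot maximum-principle argument, not a De Giorgi iteration. It keeps the earlier Corollary 5.1 of \cite{GPSS1} (restated here as Lemma~\ref{cor51}), which already gives $\sup_X|v|\le C\bigl(1+\tfrac{1}{V_\omega}\int_X|v|\,\omega^n\bigr)$ with no degeneracy hypothesis, and reduces everything to bounding $B:=\tfrac{1}{V_\omega}\int_X v_+\,\omega^n$. The auxiliary equation is taken against the metric $\omega$ itself, $(\omega+\ddbar\psi)^n=\tfrac{v_+}{B}\omega^n$, so that the density is $v_+/B$, which Lemma~\ref{cor51} bounds pointwise by $C$; consequently $\omega+\ddbar\psi$ lies in $\mathcal V(X,\theta_X,n,A,p,CK)$ and the known $L^\infty$ estimate (Corollary 4.1 of \cite{GPSS1}) gives $\|\psi\|_{L^\infty}\le C$ with no $\log\beta$ or $\log(1/a_k)$ degradation. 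The maximum principle on $\Psi=-\varepsilon(-\psi+\varepsilon^{n+1})^{n/(n+1)}+v_+$ then gives a pointwise bound for $v_+$ on $\Omega_+=\{v>0\}$, and the new ingredient closing the argument is simply that $\alpha:=\tfrac{1}{V_\omega}\int_{\Omega_+}\omega^n\le\tfrac12$ (forced by $\int_Xv\,\omega^n=0$), so integrating the pointwise bound produces $B\le C+\tfrac34B$, hence $B\le 4C$, and Lemma~\ref{cor51} finishes.

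Your proposal replaces this with a family of auxiliary equations against $\theta_X$, $(\theta_X+\ddbar\psi_k)^n=\tfrac{[\theta_X]^n}{a_k}(v-k)_+\omega^n$, and a Stampacchia iteration in $k$. There are two problems. First, you claim $\|\psi_k\|_{L^\infty}$ depends on $k$ and $\beta$ only through $\log\beta$ and $\log(1/a_k)$; this is exactly the point that needs proof, and in fact the relevant Orlicz/entropy norm of the density $\tfrac{[\theta_X]^n}{a_k}(v-k)_+\tfrac{\omega^n}{\theta_X^n}$ degrades as $a_k\to 0$ (the mass is fixed at $[\theta_X]^n$ but concentrates on $\{v>k\}$), and the entropy bound on $\omega^n/\theta_X^n$ does not control the extra factor $(v-k)_+/a_k$; the paper sidesteps this entirely by choosing $\omega$ as the background so that $\omega^n/\theta_X^n$ never enters the density, and by pre-bounding the density via Lemma~\ref{cor51}. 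Second, your comparison step integrates $(\tr_\omega\theta_X+1)^n$ over the contact set and tries to control it by $\int_X\tr_\omega\theta_X\,\omega^n=n[\theta_X]\cdot[\omega]^{n-1}$; but this only bounds the \emph{first} power of $\tr_\omega\theta_X$ in $L^1(\omega^n)$, not the $n$-th power, and no bound on $\int(\tr_\omega\theta_X)^n\omega^n$ is available. Again the paper avoids $\tr_\omega\theta_X$ altogether because the auxiliary equation is relative to $\omega$, so the AM--GM step produces $\bigl(\tfrac{(\omega+\ddbar\psi)^n}{\omega^n}\bigr)^{1/n}$ against $\Delta_\omega\psi$ with no $\theta_X$-trace term.

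In short: you correctly identify where the degeneracy assumption was used and that it should be replaceable, but you miss the two devices the paper actually uses --- retaining Corollary 5.1 of \cite{GPSS1} as a pre-bound, and exploiting $\alpha\le\tfrac12$ from the zero-mean normalization --- and the substitute you propose (a $\theta_X$-based De Giorgi iteration) runs into unresolved difficulties with the Orlicz norm of the densities as $a_k\to 0$ and with the uncontrolled powers of $\tr_\omega\theta_X$.
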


The proof of Proposition \ref{mainprop} is a straightforward iteration of the original argument in \cite{GPSS1}. It utilizes the same auxiliary complex Monge-Amp\`ere equation and is entirely based on the standard maximum principle. With Proposition \ref{mainprop}, all the results of \cite{GPSS1, GPSS2, GPS} will hold without the small degeneracy assumption (\ref{degass}). We will summarize these results in the general setting of normal K\"ahler spaces.

\begin{definition} \label{defak} Let $X$ be an $n$-dimensional compact normal K\"ahler space. Let $\pi: Y \rightarrow X$ be a log resolution of singularities and let $\theta_Y$ be a smooth K\"ahler metric  on the nonsingular model $Y$.  We define the set of admissible semi-K\"ahler currents 
$${\mathcal{AK}}(X, \theta_Y, n, p, A, K),$$  
 to be the set of any semi-K\"ahler current $\omega$ on $X$ satisfying the following conditions.

\begin{enumerate}

\item $[\omega]$ is a K\"ahler class on $X$ and $\omega$ has bounded local potentials. 

\smallskip

\item $[\pi^*\omega] \cdot [\theta_Y]^{n-1}\leq A$ and $[\omega]^n \geq A^{-1}$.  

\smallskip

\item The $p$-th Nash-Yau entropy is bounded for some $p>n$, i.e.
$${\mathcal{N}}_p (\omega) = \frac{1}{V_\omega} \int_Y \left|\log \frac{1}{V_\omega} \frac{(\pi^*\omega)^n}{(\theta_Y)^n} \right|^p (\pi^*\omega)^n  \leq K, $$
where $V_\omega= [\omega]^n$.

\medskip

\item The log volume measure ratio 
$$\log \left( \frac{(\pi^*\omega)^n}{V_\omega (\theta_Y)^n} \right)$$ has log type analytic singularities (c.f. Definition 7.2 of \cite{GPSS2}).

\end{enumerate}

\end{definition}

Obviously, if $X$ is a smooth K\"ahler manifold, 
$${\mathcal{AK}}(X, \theta_X, n, p, A, K) =\left\{ \omega\in  {\mathcal V}(X, \theta_X, n, A, p, K ): [\omega]^n \geq A^{-1} \right\}$$
considering the identity map $\pi=id: X \rightarrow X$. Proposition \ref{mainprop} enables us to enlarge the $\mathcal{AK}$-space in \cite{GPSS2}  to the $\mathcal{AK}$-space in Definition \ref{defak}.

Let $X$ be an $n$-dimensional normal K\"ahler variety. For any $p>n$ and any $\omega\in \mathcal{AK}(X, \theta_Y, n,p, A, K)$, we let $(\overline{X}, d, \omega^n)$ the metric measure space associated to $(X, \omega)$ as in \cite{GPSS2}. The Sobolev space $W^{1,2}(\overline{X},d, \omega^n)$ and its spectral theory are established   in \cite{GPSS2}  on  the metric measure space $(\overline{X}, d, \omega^n)$ associated to $\omega$.

We now state the geometric consequence of Proposition \ref{mainprop} for the works in \cite{GPSS1, GPSS2}. 
 
\begin{theorem} \label{mainthm} Let $X$ be an $n$-dimensional compact normal K\"ahler space. For any $\omega\in \mathcal{AK}(X, \theta_Y, n, p, A, K)$, the metric measure space $(\overline{X}, d, \omega^n)$ associated to $(X, \omega)$ satisfies the following properties. 

\begin{enumerate}

\item There exists $C=C(X, \theta_Y, n, p, A, K)>0$ such that  
$${\textnormal{diam}}(\overline{X}, d) \leq C.$$
In particular, $(\overline{X}, d)$ is a compact metric space.

\medskip 

\item  There exist $q>1$ and $C_S=C_S(X, \theta_Y, n, p,  A, K, q)>0$ such that 
$$
\Big(\int_{\overline{X}} | u  |^{2q}\omega^n   \Big)^{1/q}\le C_S \left( \int_{\overline{X}} |\nabla u|^2 ~\omega^n + \int_{\overline{X}} u^2 \omega^n \right) .
$$
for all $u\in W^{1, 2}(\overline{X}, d, \omega^n)$. 

\medskip

\item There exists $C=C(X, \theta_Y, n, p, A, K)>0$ such that the following trace formula holds for the heat kernel $H(x, y, t)$ of $(\overline{X}, d, \omega^n)$
$$H(x,x, t) \leq \frac{1}{V_\omega} + \frac{C}{V_\omega} t^{-\frac{q}{q-1}} $$
on $\overline{X}\times (0, \infty)$. 

\medskip

\item Let $0=\lambda_0 < \lambda_1 \leq \lambda_2 \leq ... $ be the increasing sequence of eigenvalues of the Laplacian $-\Delta_\omega$ on $(\overline{X}, d, \omega^n)$. Then there exists $c=c(X, \theta_Y, n, p, A, K)>0$ such that
$$\lambda_k \geq c k^{\frac{q-1}{q}}. $$

\end{enumerate}

\end{theorem}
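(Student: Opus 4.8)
The plan is to reduce everything to the single new input, Proposition \ref{mainprop}: once Corollary 5.1 of \cite{GPSS1} is replaced by Proposition \ref{mainprop}, every argument in \cite{GPSS1, GPSS2, GPS} should go through with constants depending only on $(X, \theta_Y, n, p, A, K)$, so that the small degeneracy hypothesis (\ref{degass}) is never invoked. The first step is to pass to the smooth model: pulling back along the log resolution $\pi : Y \to X$ replaces $\omega$ by the semi-positive big class $[\pi^*\omega]$ on $Y$, whose Monge--Amp\`ere measure $(\pi^*\omega)^n$ has density $(\pi^*\omega)^n/\theta_Y^n$ with logarithm of log-type analytic singularities (condition (4) of Definition \ref{defak}) and bounded $p$-th Nash--Yau entropy (condition (3)). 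One then runs, on $Y$, the auxiliary complex Monge--Amp\`ere equation of \cite{GPSS1} --- whose solvability and a priori $L^\infty$/Laplacian estimates rest only on pluripotential-type bounds \`a la \cite{GPT} together with the entropy bound, not on (\ref{degass}) --- and transports the conclusions to the metric measure space $(\overline X, d, \omega^n)$ constructed in \cite{GPSS2}.

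For part (1): in \cite{GPSS1} the diameter is controlled via pointwise upper, lower and gradient bounds for the Green's function of $\Delta_\omega$, obtained by a De Giorgi--Nash--Moser iteration driven by that auxiliary equation, and the \emph{only} place the degeneracy assumption enters is the $L^\infty$ bound for functions $v$ with $|\Delta_\omega v|\le 1$ and $\int_X v\,\omega^n = 0$, i.e. precisely Corollary 5.1. Inserting Proposition \ref{mainprop} in its place yields the Green's function bounds, hence the uniform bound on $\textnormal{diam}(\overline X, d)$; compactness of $(\overline X, d)$ then follows because, by \cite{GPSS2}, it is a complete length space, and a bounded length space is totally bounded.

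Parts (2)--(4) are then formal. The Sobolev inequality of \cite{GPSS2} is derived from the Green's function bounds through the representation $u - \bar u = -\int_X G(\cdot, y)\,\Delta_\omega u(y)\,\omega^n(y)$ and a fractional-integration estimate for $G$, so it holds on all of $\mathcal{AK}(X, \theta_Y, n, p, A, K)$ with $q>1$ and $C_S$ depending only on the stated data. The heat-kernel trace bound (3) follows from this (Nash-type) Sobolev inequality by the standard semigroup ultracontractivity argument, the additive $V_\omega^{-1}$ coming from $\int_{\overline X}\omega^n = V_\omega$; and the eigenvalue lower bound (4) follows from (3) by bounding $\sum_k e^{-\lambda_k t}$ by the trace and optimizing in $t$, the usual Weyl-type counting.

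The main difficulty is bookkeeping rather than a new idea: one must verify that in \cite{GPSS1, GPSS2, GPS} the assumption (\ref{degass}) genuinely enters \emph{only} through Corollary 5.1 of \cite{GPSS1} --- in particular that the construction of $(\overline X, d, \omega^n)$, the integrability statements, and all iteration constants are independent of it --- and one must track constant dependence through $\pi$, controlling $(\pi^*\omega)^n/\theta_Y^n$ and its logarithm in the presence of log-type analytic singularities so that the entropy bound is preserved. Granting Proposition \ref{mainprop}, each of these is a direct adaptation of what is already in \cite{GPSS1, GPSS2}.
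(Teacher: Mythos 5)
Your proposal follows the same route as the paper: Section 3 states a smooth-manifold version (Theorem 3.1), observes that its proof is the argument of \cite{GPSS1, GPSS2} verbatim once Proposition \ref{mainprop} replaces Corollary 5.1 of \cite{GPSS1}, and then reduces Theorem \ref{mainthm} to Theorem 3.1 via the resolution/smoothing arguments of Section 7 of \cite{GPSS2}, exactly as you sketch. One small slip: a bounded complete length space need not be totally bounded (the unit sphere of $\ell^2$ with its intrinsic metric is a counterexample), so your stated justification for compactness of $(\overline{X}, d)$ does not work as written; in \cite{GPSS2} compactness of $(\overline{X}, d)$ comes from the construction of the metric measure space itself (together with local compactness, so that Hopf--Rinow applies), not merely from the diameter bound and the length-space property.
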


We remark that Theorem \ref{mainthm} can be generalized to the collapsing cases if we drop the non-collapsing assumption $[\omega]^n\geq A^{-1}$ in Definition \ref{defak}. The normalization constant $I_\omega = [\pi^* \omega] \cdot [\theta_Y]^{n-1}$ will be added to the estimates analogous to those in Section 2 of \cite{GPSS2}. 
 



\section{Proof of Proposition \ref{mainprop}}

We will prove Proposition \ref{mainprop} in this section.  The following lemma is proved in \cite{GPSS1} (Corollary 5.1).  
\begin{lemma} \label{cor51}  
Suppose $\omega\in \mathcal{V}(X, \theta_X, n, A, p, K)$. If $v\in C^2(X)$ satisfies
\begin{equation}\label{meanv}
|\Delta_{\omega} v| \le 1\, \mbox{ and } \int_X v \omega^n = 0,
\end{equation}
then there is  $C=C(X, \theta_X, n, A, p, K)>0$ such that
$$\sup_X |v|\le C\left(1 + \frac{1}{[\omega]^n} \int_X |v| \omega^n  \right).$$

\end{lemma}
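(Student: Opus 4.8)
This is Corollary 5.1 of \cite{GPSS1}, and the natural route is the maximum–principle scheme of Guo--Phong--Tong underlying membership in $\mathcal V(X,\theta_X,n,A,p,K)$. First I would pass to an oscillation estimate: since $-v$ also satisfies \eqref{meanv} and $\int_X v\,\omega^n=0$ forces $\inf_X v\le 0\le\sup_X v$, we have $\sup_X|v|\le \osc_X v$, so it suffices to bound $\osc_X v$ by $C\big(1+\tfrac{1}{V_\omega}\int_X|v|\,\omega^n\big)$. Writing $M:=\sup_X v$, $x_0$ a maximum point, and $u:=M-v\ge 0$ (so $u(x_0)=0$, $|\Delta_\omega u|\le 1$, $\sup_X u=\osc_X v$, and $\tfrac{1}{V_\omega}\int_X u\,\omega^n=M$), I would run the argument also with $v$ replaced by $-v$ and add the two outcomes at the end.

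Next, for a parameter $s$ in a fixed interval $(0,s_0]$ with $s_0=s_0(n,p)$, introduce the auxiliary complex Monge--Amp\`ere equation
\[
(\omega+\ddbar\psi_s)^n=\frac{e^{-su}}{A_s}\,\omega^n,\qquad \sup_X\psi_s=0,\qquad A_s:=\frac{1}{V_\omega}\int_X e^{-su}\,\omega^n\in(0,1],
\]
solvable with smooth $\psi_s$ by Yau's theorem. Since $u\ge 0$ and the oscillation of $-su-\log A_s$ is at most $s\,\osc_X v$, the $p$-th Nash--Yau entropy of the right-hand side (relative to $\theta_X$) is controlled in terms of $n,p,A,K$ and $s\,\osc_X v$, so the $L^\infty$ estimate for complex Monge--Amp\`ere equations (cf. \cite{GPT}, and as used in \cite{GPSS1}) yields $-\inf_X\psi_s\le C(1+s\,\osc_X v)$. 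I expect \emph{this} to be the main obstacle: the bound is circular in appearance, as the entropy of the auxiliary equation involves $\osc_X v$, the very quantity being estimated; the way around it, exactly as in \cite{GPT,GPSS1}, is to keep $s$ small so that the $\osc_X v$-dependence is subcritical, and to run it jointly with the iteration described below — and, crucially, \emph{not} to push that iteration to completion, which is precisely what leaves the residual $L^1$-term (removing it is the content of Proposition~\ref{mainprop}).

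Then comes the maximum principle proper. With $\epsilon_s\asymp s$ small and a large constant $\Lambda=\Lambda(n)$, consider the test function $\Phi_s:=u-\epsilon_s(-\psi_s+\Lambda)^{\frac{n}{n+1}}$. At a maximum point $x_\ast$ of $\Phi_s$, combining $\ddbar\Phi_s(x_\ast)\le 0$ with the elementary inequality $\ddbar\big((-\psi_s+\Lambda)^{\frac{n}{n+1}}\big)\le c_n(-\psi_s+\Lambda)^{-\frac{1}{n+1}}(\omega-\omega_{\psi_s})$, the arithmetic--geometric mean inequality applied to the eigenvalues of $\omega^{-1}(\omega+\ddbar\psi_s)$, and the hypothesis $\Delta_\omega u\le 1$, one obtains a pointwise inequality of the form
\[
e^{-s\,u(x_\ast)}\le C(n,\epsilon_s)\,A_s\big(1+(-\psi_s(x_\ast)+\Lambda)^{\frac{n}{n+1}}\big).
\]
Using that $x_\ast$ maximizes $\Phi_s$ — in particular $\Phi_s(x_\ast)\ge\tfrac{1}{V_\omega}\int_X\Phi_s\,\omega^n$, where the average of $\Phi_s$ together with the estimate $-\inf_X\psi_s\le C(1+s\,\osc_X v)$ from the previous step and the normalization $\int_X v\,\omega^n=0$ is where $\tfrac{1}{V_\omega}\int_X|v|\,\omega^n$ enters — one converts this into a bound of the shape
\[
-\log A_s\le \tfrac{s}{2}\,\osc_X v+C\log\!\big(1+\osc_X v\big)+C\Big(1+\tfrac{1}{V_\omega}\int_X|v|\,\omega^n\Big).
\]
Finally I would exploit the convexity of $s\mapsto-\log A_s$: it vanishes at $s=0$, has derivative $\tfrac{1}{V_\omega}\int_X u\,\omega^n=M$ there, and satisfies $|(-\log A_s)''|\le \osc_X v\cdot(-\log A_s)'$, whence $(-\log A_s)'\ge M e^{-s\,\osc_X v}$ and $-\log A_{s_0}\ge \tfrac{M}{\osc_X v}\big(1-e^{-s_0\osc_X v}\big)$. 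Feeding this into the displayed inequality, and iterating the resulting estimate over finitely many choices of $s\le s_0$ to absorb the sublinear terms $\tfrac{s}{2}\osc_X v$ and $C\log(1+\osc_X v)$ — using $M\le\osc_X v$ and symmetrizing in $v\leftrightarrow -v$ so that $\osc_X v\le \sup_X v+\sup_X(-v)$ is bounded by the sum of the two outputs — one arrives at $\osc_X v\le C\big(1+\tfrac{1}{V_\omega}\int_X|v|\,\omega^n\big)$, hence $\sup_X|v|\le C\big(1+\tfrac{1}{V_\omega}\int_X|v|\,\omega^n\big)$, as claimed.
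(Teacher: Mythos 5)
Your proposal takes a genuinely different route: the paper (following \cite{GPSS1}, and as visible in the proof of Proposition \ref{mainprop}) uses a \emph{linearly-weighted} auxiliary equation $(\omega+\ddbar\psi)^n=\tfrac{v_+}{B}\omega^n$ with test function $-\varepsilon(-\psi+\varepsilon^{n+1})^{\frac{n}{n+1}}+v_+$, whereas you build an \emph{exponentially-weighted} family $(\omega+\ddbar\psi_s)^n=\tfrac{e^{-su}}{A_s}\omega^n$ closer in spirit to the original Guo--Phong--Tong scheme. That is a legitimate idea, but as written it does not close, for two reasons.

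First, the intermediate claim $-\inf_X\psi_s\le C(1+s\,\osc_X v)$ is not what the $L^\infty$ estimate of \cite{GPT} delivers. The $p$-th entropy of $\omega_{\psi_s}$ is
\[
\frac{1}{V_\omega}\int_X\Big|{-su}-\log A_s+\log\tfrac{\omega^n}{V_\omega\theta_X^n}\Big|^p\frac{e^{-su}}{A_s}\,\omega^n,
\]
and while $|{-su}-\log A_s|\le s\,\osc_X v$, the density $e^{-su}/A_s$ can be as large as $e^{s\,\osc_X v}$; so the entropy bound you obtain is of the shape $(s\,\osc_X v)^p+e^{s\,\osc_X v}K$, which grows \emph{exponentially}, not linearly, in $s\,\osc_X v$ once $s$ is fixed. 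The only way to keep it bounded is to take $s\lesssim 1/\osc_X v$, which feeds directly into the second problem.

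Second, and this is the decisive gap, the ``convexity'' step does not produce a useful lower bound. You correctly derive $(-\log A_s)'\ge Me^{-s\,\osc_X v}$ (here $M=\sup_X v=\tfrac{1}{V_\omega}\int_X u\,\omega^n$), and hence
\[
-\log A_{s_0}\ \ge\ \frac{M}{\osc_X v}\bigl(1-e^{-s_0\,\osc_X v}\bigr).
\]
But the right-hand side is always $\le M/\osc_X v\le 1$, regardless of how large $M$ or $\osc_X v$ may be. In the regime where there is something to prove ($\osc_X v$ large), choosing $s\lesssim 1/\osc_X v$ to control $\psi_s$ makes the lower bound $O(M/\osc_X v)\le 1$, and comparing it with the upper bound $-\log A_s\le \tfrac{s}{2}\osc_X v+C\log(1+\osc_X v)+C(1+\tfrac{1}{V_\omega}\int_X|v|\omega^n)$ yields nothing: both sides are $O(1)$ plus the $L^1$ term, and the inequality is vacuous. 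Symmetrizing in $v\leftrightarrow -v$ only adds two such vacuous constraints; it does not recover a bound on $\osc_X v$. The reason the linear weight $v_+/B$ works in \cite{GPSS1} is precisely that the resulting maximum-principle inequality is \emph{homogeneous} in $v_+/B$ and can be integrated against $\omega^n$ directly to produce a self-improving inequality for $B$; the exponential weight loses that scaling. To repair your scheme you would need either a genuinely linear-in-$s\,\osc_X v$ control of $\|\psi_s\|_{L^\infty}$ together with a lower bound on $-\log A_s$ that is linear in $M$ (not $M/\osc_X v$), or to switch to the linear auxiliary equation as in \cite{GPSS1}.
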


\begin{proposition}
Suppose $\omega\in \mathcal{V}(X, \theta_X, n, A, p, K)$.  If $v\in C^2(X)$ satisfies $|\Delta_\omega v|\le 1$ and $\int_X v\omega^n = 0$, then there is   $C=C(X, \theta_X, n, A, p, K)>0$ such that
$$\sup_X |v|\le C.$$ 

\end{proposition}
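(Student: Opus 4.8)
The plan is to bootstrap from Lemma~\ref{cor51} by controlling the $L^1$ norm $\int_X |v|\,\omega^n$ in terms of a smaller power of $\sup_X|v|$, and then iterate. Concretely, write $M = \sup_X|v|$ and $m = \inf_X v$, so $M \le \|v\|_{L^\infty} \le M - m$ up to harmless factors (using $\int_X v\,\omega^n = 0$, both $M$ and $-m$ are comparable to $\|v\|_{L^\infty}$, so it suffices to bound $M$). The key observation is that the set where $|v|$ is close to its extreme value cannot be too large: by the mean-zero condition, $v$ must also take values near $m < 0$, and the auxiliary complex Monge–Amp\`ere equation technique used in the proof of Corollary 5.1 in \cite{GPSS1} actually gives quantitative control on the sublevel/superlevel sets, i.e. on $|\{v > M - \lambda\}|_{\omega^n}$ and $|\{v < m + \lambda\}|_{\omega^n}$, in terms of $\lambda$. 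The idea is to re-run that argument not just to get a sup bound but to get decay of the distribution function of $v$.

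First I would revisit the proof of Lemma~\ref{cor51}: it solves $\omega_\varphi^n = e^{\text{(something involving }v)}\,\omega^n$ or uses the Monge–Amp\`ere capacity/comparison machinery of \cite{GPT} to show that at a maximum point of $v$ one has $v(x_{\max}) \le C(1 + \fint_X |v|\,\omega^n)$. The refinement is to localize: for $t > 0$ apply the same comparison argument to the function $v$ truncated at level $M - t$, which yields an estimate of the form $\fint_X (v - (M-t))_+ \,\omega^n \ge c\, t - C$ forcing $M \le C + \fint (v - (M - t))_+\omega^n / (\text{something})$; dually, using $\int v\,\omega^n = 0$ one converts an $L^1$ bound on $(M - v)$ into the desired control. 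Then Lemma~\ref{cor51} reads $M \le C(1 + \fint_X |v|\,\omega^n)$, and I want to replace the right-hand side by $C(1 + M^{1-\delta})$ for some $\delta > 0$, or more simply by $\tfrac12 M + C$, after which the conclusion $M \le 2C$ is immediate.

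The mechanism for gaining the contraction factor is H\"older: split $\int_X |v|\,\omega^n = \int_{\{|v| \le M/2\}} + \int_{\{|v| > M/2\}}$; the first piece is $\le \tfrac12 M V_\omega$, which is already too weak, so instead one estimates $\int_X |v|\,\omega^n \le M^{1-\delta}\int_X |v|^\delta\,\omega^n$ and shows $\int_X |v|^\delta\,\omega^n$ is uniformly bounded — but that in turn requires an a priori integrability estimate for $v$ with constants independent of $M$, which is exactly what the entropy bound $\mathcal{N}_{X,\theta_X,p}(\omega)\le K$ together with the Monge–Amp\`ere estimates of \cite{GPT} should supply (a uniform bound on $\fint_X e^{\alpha|v|}\omega^n$ or at least on $\fint_X |v|\,\omega^n$ when $|\Delta_\omega v|\le 1$, via a De Giorgi–type iteration on the level sets as in \cite{GPSS1}). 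So the structure is: (i) extract from the Corollary 5.1 argument a quantitative bound on the measure of $\{|v| > s\}$ of the form $|\{|v|>s\}|_{\omega^n} \le C\,e^{-s/C}$ (or polynomial decay with large exponent), using that $|\Delta_\omega v|\le 1$ and $\fint v\,\omega^n = 0$; (ii) integrate this to bound $\fint_X |v|\,\omega^n \le C$ outright, at which point Lemma~\ref{cor51} gives $\sup_X|v|\le C$ directly.

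The main obstacle I expect is step (i): making the comparison/maximum-principle argument from \cite{GPSS1} yield a bound on the \emph{distribution} of $v$ with all constants depending only on $(X,\theta_X,n,A,p,K)$ and not on any a priori bound on $v$ itself. The danger is circularity — the original Corollary 5.1 may implicitly use a crude sup bound on $v$ when solving the auxiliary Monge–Amp\`ere equation, so one must check that the auxiliary equation (whose right-hand side involves only $\Delta_\omega v$, which is bounded by hypothesis, not $v$) can be solved and estimated using only Kolodziej-type $L^\infty$ estimates \`a la \cite{GPT}, which depend on the entropy $K$ and the volume parameter $A$ but are genuinely uniform. Once that uniformity is secured, the iteration (applying the localized estimate at a sequence of levels $M, M/2, M/4,\dots$, or simply integrating the level-set decay) is routine and closes the argument.
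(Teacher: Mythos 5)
Your high-level plan --- reduce to bounding $\frac{1}{V_\omega}\int_X |v|\,\omega^n$ and then feed that back into Lemma~\ref{cor51}, and aim for a contraction of the form $M \le \tfrac12 M + C$ --- is exactly the right skeleton, and you also correctly flag the circularity concern and correctly observe that the mean-zero condition must be the source of the gain. But you leave the crucial step as a wish list rather than an argument: you propose three candidate mechanisms (a distribution-function decay $|\{|v|>s\}|\le Ce^{-s/C}$, a H\"older split $\int|v|\le M^{1-\delta}\int|v|^{\delta}$, or a direct contraction) and acknowledge that each needs an a priori integrability estimate ``with constants independent of $M$,'' but you never supply one. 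In fact a uniform bound on $\fint_X|v|\,\omega^n$ (or worse, on $\fint_X e^{\alpha|v|}\omega^n$) is essentially what the proposition is asserting at the $L^1$ level, so asserting it as an input is close to begging the question; the entropy bound controls the volume form, not $v$, and does not by itself give integrability of $v$.

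The paper closes the gap with a simpler and cruder observation than the one you reach for. Since $\int_X v\,\omega^n=0$, after replacing $v$ by $-v$ if necessary one may assume $\alpha := \frac{1}{V_\omega}\int_{\{v>0\}}\omega^n \le \tfrac12$. One then re-runs the auxiliary Monge--Amp\`ere argument of Lemma 5.1 in \cite{GPSS1} essentially verbatim: solve $(\omega+\ddbar\psi)^n = \frac{v_+}{B}\omega^n$ with $B=\frac{1}{V_\omega}\int_X v_+\,\omega^n$, use Lemma~\ref{cor51} to get the conditional bound $\|v\|_{L^\infty}\le CB$ so that the new metric still has uniformly bounded entropy (this resolves your circularity worry --- note the right-hand side of the auxiliary equation involves $v_+$, not $\Delta_\omega v$, and the conditional $L^\infty$ bound is what keeps the entropy under control), and run the maximum principle with the test function $\Psi = -\varepsilon(-\psi+\varepsilon^{n+1})^{n/(n+1)} + v_+$. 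The decisive adjustment is that the dimensional constant $\delta$ and hence $\beta=\left(\delta+\frac{n^2}{n+1}\right)^n\frac{(n+1)^n}{n^{2n}}$ is tuned to equal $1.5$, so that $\alpha\beta\le 0.75<1$. The pointwise bound $v_+\le \beta^{1/(n+1)}B^{1/(n+1)}(C+\beta B)^{n/(n+1)}$ is then integrated only over $\Omega_+=\{v>0\}$, which has normalized measure $\alpha\le\tfrac12$; the extra factor $\alpha$ in that integral, combined with the tuned $\beta$, yields $B \le C + 0.75\,B$ and hence $B\le 4C$, at which point Lemma~\ref{cor51} finishes. So the contraction you were hoping for comes not from a finer analysis of the distribution function of $v$, but from the blunt fact that the positive part of $v$ is supported on at most half the measure, together with a deliberate choice of the constant $\beta$ in the barrier. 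You should replace your steps (i)--(ii) by this support-halving observation; as written, your proposal has the right target but does not actually produce the $L^1$ bound.
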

\begin{proof}
The proof is to repeat the argument of Lemma 5.1 of \cite{GPSS1} by the maximum principle. For convenience, we denote $C>0$  by uniform constants that only depend on $X$, $\theta_X$, $n$, $A$, $p$, $K$ throughout the argument. We first fix the constant $\delta = \delta(n)>0$  satisfying
$$\beta = \left(\delta+\frac{n^2}{n+1} \right) ^n \frac{(n+1)^n}{n^{2n} } =1.5.$$ 
Replacing $v$ by  $\delta v$, we may assume $|\Delta_\omega v| \le \delta$. Without loss of generality we may assume that 
$\alpha= \frac{1}{V_\omega}\int_{\{v> 0\}} \omega^n \le \frac{1}{2}$, otherwise we consider $-v$.

  Given $v_+ = \max(v, 0)$, we  consider the auxiliary complex Monge-Amp\`ere equation
\begin{equation} \label{ama1} 
(\omega+ \ddbar \psi)^n = \frac{v_+ } { B} \omega^n, \quad \sup_X \psi = 0,
\end{equation}
with $$B = \frac{1}{V_\omega} \int_X  v_+  \omega^n \in \mathbb{R}^+.$$ 
In fact, $\frac{1}{V_\omega} \int_X |v| \omega^n = 2B$ as $\int_X v \omega^n =0$. Therefore we can assume $B\geq 1$, otherwise, the proposition automatically holds. One can easily replace $v_+$ by a positive smoothing of $v_+$ as in \cite{GPSS1} and then take limits. For simplicity, we work with $v_+$ directly.     There exists $C>0$ such that 
$$\|v_+\|_{L^\infty(X)} \leq \|v\|_{L^\infty(X)} \leq CB$$
by applying Lemma \ref{cor51}. Therefore $\omega + \ddbar \psi \in \mathcal{V}(X, \theta_X, n, A, p, CK)$ for some uniform $C>0$. We can apply Corollary 4.1 in \cite{GPSS1} to derive the $L^\infty$-estimate 
\begin{equation}\label{psib}
\| \psi \|_{L^\infty(X)} \le C.
\end{equation}
for some uniform $C>0$.

We now consider the following auxiliary function constructed in  Lemma 5.1 of \cite{GPSS1}.
$$\Psi=  -\varepsilon (-\psi +  \varepsilon^{n+1}) ^{\frac{n}{n+1}} + ( v_+ ),~ \varepsilon  = (\beta B)^{1\over n+1}.$$
 We compute as in \cite{GPSS1}, at a maximum point $p$ of $\Psi$, so that 
\begin{align*}
0\ge &  \Delta_\omega \Psi\\
\ge &  \frac{n\varepsilon}{n+1} (-\psi +  \varepsilon^{n+1})^{-\frac{1}{n+1}} \Delta_\omega \psi + \Delta_\omega v_+\\
\ge & \frac{n^2\varepsilon}{n+1} (-\psi +  \varepsilon^{n+1})^{-\frac{1}{n+1}} \left(\frac{(\omega+ \ddbar\psi)^n}{\omega^n}\right)^{1/n} - \delta - \frac{n^2 \varepsilon }{n+1} (-\psi  + \varepsilon^{n+1})^{-\frac{1}{n+1}}\\
\ge  & \frac{n^2\varepsilon}{n+1} (-\psi +\varepsilon^{n+1})^{-\frac{1}{n+1}} \frac{(v_+  )^{1/n}}{B^{1/n}} - \delta - \frac{n^2}{n+1}. 
\end{align*}
Therefore at $p$, we have 
\begin{equation}\label{eqn:new2}v_+  \le \left(\delta + \frac{n^2}{n+1} \right) ^n \frac{(n+1)^n}{n^{2n} \varepsilon^n} B (-\psi + \varepsilon^{n+1}) ^{\frac{n}{n+1}} = \varepsilon  (-\psi +  \varepsilon^{n+1}) ^{\frac{n}{n+1}}.\end{equation}
This shows that at the maximum point of $\Psi$, $\Psi\le 0$, hence $\sup_X \Psi  \le 0$. Hence on $\Omega_+ = \{v> 0\}$, we have
$$v_+   \le \beta^{\frac{1}{n+1}} B^{{\frac{1}{n+1}}}  (-\psi  + \varepsilon^{n+1}) ^{\frac{n}{n+1}}\le \beta^{\frac{1}{n+1}} B^{{\frac{1}{n+1}}}  (C  + \beta B) ^{\frac{n}{n+1}},$$ 
by (\ref{psib}).
Integrating over $\Omega_+$, we have  
%
\begin{align*}
B = & \frac{1}{V_\omega}\int _{\Omega_+} v_+\omega^n 
\le  \beta^{\frac{1}{n+1}} B^{\frac{1}{n+1}} (C + \alpha \beta B )^{\frac{n}{n+1}} \alpha \le (\alpha \beta)^{\frac{1}{n+1}} B^{\frac{1}{n+1}} (C + \alpha \beta B )^{\frac{n}{n+1}}.  
\end{align*}
Therefore
\begin{equation}\label{eqn:new} B \le (\alpha \beta)^{\frac{1}{n} } (C + \alpha \beta  B ) \le (0.75 )^{\frac{1}{n} } (C + 0.75  B ) \leq C + 0.75 B 
\end{equation}
since $\alpha \beta \leq 0.75$. Immediately we have $B \le 4C$.

Finally, as observed earlier,  $\frac{1}{V_\omega}\int_{X} |v| \omega^n =2B \le 8C$.  We now can complete the proof of the proposition by invoking Lemma \ref{cor51} again.
\end{proof}

\section{Proof of Theorem \ref{mainthm}}

We will combine the results of \cite{GPSS1, GPSS2}  for $\mathcal{V}(X, \theta_X, n, p, A, K)$. Due to Proposition \ref{mainprop}, we can remove the small degeneracy assumption on a barrier function $\gamma$.
 
\begin{theorem} \label{smthm} Let $X$ be an $n$-dimensional compact K\"ahler manifold. For any $\omega\in \mathcal{V}(X, \theta_X, n, p, A, K)$, the following hold.

\begin{enumerate}

\item There exists $C=C(X, \theta_X, n, p, A, K)>0$ such that  
$${\textnormal{diam}}(X, d) \leq C.$$
In particular, $(X, d)$ is a compact metric space.

\medskip 

\item  There exist $q>1$ and $C_S=C_S(X, \theta_X, n, p,  A, K,  q)>0$ such that 
$$
\Big(\int_{X} | u  |^{2q}\omega^n   \Big)^{1/q}\le C_S \left( \int_{X} |\nabla u|_\omega^2 ~\omega^n + \int_{\overline{X}} u^2 \omega^n \right) .
$$
for all $u\in W^{1, 2}(X, d, \omega^n)$. 

\medskip

\item There exists $C=C(X, \theta_X, n, p, A, K)>0$ such that the following trace formula holds for the heat kernel $H(x, y, t)$ of $(X, d, \omega^n)$
$$H(x,x, t) \leq \frac{1}{V_\omega} + \frac{C}{V_\omega} t^{-\frac{q}{q-1}} $$
on $X\times (0, \infty)$. 

\medskip

\item Let $0=\lambda_0 < \lambda_1 \leq \lambda_2 \leq ... $ be the increasing sequence of eigenvalues of the Laplacian $-\Delta_\omega$ on $(X, d, \omega^n)$. Then there exists $c=c(X, \theta_X, n, p, A, K)>0$ such that
$$\lambda_k \geq c k^{\frac{q-1}{q}}. $$

\end{enumerate}

\end{theorem}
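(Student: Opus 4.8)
The plan is to reduce everything to the key analytic input, Proposition~\ref{mainprop}, which is the only place where the small degeneracy assumption (\ref{degass}) was used in \cite{GPSS1, GPSS2, GPS}. Concretely, I would first observe that all four conclusions of Theorem~\ref{smthm} were already proved in \cite{GPSS1, GPSS2} under the hypothesis that $\omega \in \mathcal{W}(X, \theta_X, n, A, p, K; \gamma)$ for some non-negative continuous $\gamma$ satisfying $\dim_{\mathcal H}\{\gamma = 0\} < 2n-1$, and that the numerical constants there depend on $\gamma$ \emph{only} through its role in the one estimate that is now replaced by Proposition~\ref{mainprop} (the $L^\infty$-bound on functions with bounded Laplacian and zero mean, i.e.\ the improvement of Corollary~5.1 of \cite{GPSS1}). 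So the first step is to go through the proofs of the Green's function bound (\cite{GPSS1}) and of the Sobolev inequality, heat kernel trace bound, and eigenvalue lower bound (\cite{GPSS2}), and identify each appeal to the $\gamma$-dependent version of Corollary~5.1; then substitute Proposition~\ref{mainprop}, whose constant depends only on $X, \theta_X, n, A, p, K$.

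Second, I would assemble the chain of implications in the order used in \cite{GPSS1, GPSS2}: (i) Proposition~\ref{mainprop} (together with the Monge--Amp\`ere $L^\infty$-estimate of \cite{GPT, GPSS1}) yields the uniform Green's function estimate, i.e.\ a uniform upper bound on the Green's function of $-\Delta_\omega$ normalized against $\omega^n$, plus the uniform bound on $\frac{1}{V_\omega}\int_X |G_\omega(x,\cdot)|\,\omega^n$; (ii) the Green's function bound gives the diameter bound via the standard integration-of-the-distance-function argument (as in \cite{GPSS1}), proving part (1); (iii) the Green's function bound also yields the Sobolev inequality of part (2), with exponent $q = q(n,p) > 1$, via the argument of \cite{GPSS2}; (iv) the Sobolev inequality implies the on-diagonal heat kernel bound of part (3) by the Nash--Moser / Davies iteration, and part (4) follows from part (3) by the standard Tauberian/Karamata argument relating $\sum_k e^{-\lambda_k t}$ to the eigenvalue counting function. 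Each of these is a verbatim repetition of the corresponding step in \cite{GPSS1, GPSS2}, with Proposition~\ref{mainprop} inserted at the single point where (\ref{degass}) entered.

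Third, for the normal K\"ahler space version (Theorem~\ref{mainthm}), I would pull everything back to a log resolution $\pi\colon Y\to X$, work with $\pi^*\omega$ on $Y$, and note that conditions (1)--(4) of Definition~\ref{defak} are exactly what is needed so that the constants produced on $Y$ depend only on $(X,\theta_Y,n,p,A,K)$; the log-type analytic singularities condition (4) is used, as in \cite{GPSS2}, to transfer the estimates from $Y$ back to the metric measure space $(\overline X, d, \omega^n)$. The point is that $\pi^*\omega$ need not lie in $\mathcal{W}$ on $Y$ (its Monge--Amp\`ere measure degenerates along the exceptional locus, which has real codimension exactly $2$), but it does lie in $\mathcal{V}(Y,\theta_Y,n,A,p,K)$, and Proposition~\ref{mainprop} applies to any element of $\mathcal{V}$.

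The main obstacle is, in fact, purely bookkeeping rather than conceptual: one must verify that \emph{no other} hidden use of the small degeneracy assumption (\ref{degass}) occurs anywhere in \cite{GPSS1, GPSS2, GPS} — in particular that it is not quietly invoked in constructing the barrier used in the Green's function argument, or in a compactness step, or in justifying the integration-by-parts needed for the Sobolev inequality on the singular space. Once one confirms that (\ref{degass}) enters only through the $\gamma$-dependent Corollary~5.1 of \cite{GPSS1}, the replacement by Proposition~\ref{mainprop} is immediate and all constants become uniform in the claimed parameters. I do not expect any genuinely new estimate to be required; the content is entirely in Proposition~\ref{mainprop}, already proved in Section~2.
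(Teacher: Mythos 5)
Your proposal matches the paper's proof, which simply asserts that Theorem~\ref{smthm} ``follows line by line in the argument of \cite{GPSS1, GPSS2} due to Proposition~\ref{mainprop}.'' The roadmap you give (Green's function $\Rightarrow$ diameter $\Rightarrow$ Sobolev $\Rightarrow$ heat kernel $\Rightarrow$ eigenvalues, with Proposition~\ref{mainprop} replacing the $\gamma$-dependent Corollary 5.1) is exactly the intended chain, and your caveat about checking for hidden uses of (\ref{degass}) is the only real content of the verification.
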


The proof of Theorem \ref{smthm} follows line by line in the argument of \cite{GPSS1, GPSS2} due to Proposition \ref{mainprop}. Now we can reduce Theorem \ref{mainthm} to Theorem \ref{smthm} by the smoothing arguments in Section 7 of \cite{GPSS2}. 

\bigskip

\noindent {\bf{Acknowledgements:}} The paper is inspired by the very recent work of Duc-Viet Vu \cite{Vu}. Upon completion of this paper, we were kindly informed by Vincent Guedj and Tat Dat T$\hat{\mathrm{o}}$ of their work \cite{GT} on the same improvement of $L^\infty$-estimate as in Proposition \ref{mainprop} using a different method. We very much appreciate their interest in our earlier works.

\bigskip


\end{document}